\newtheorem{theorem}{Theorem}
\newtheorem{lemma}[theorem]{Lemma}
\newtheorem{proposition}[theorem]{Proposition}
\newtheorem{corollary}[theorem]{Corollary}
\theoremstyle{definition}
\newtheorem{definition}[theorem]{Definition}
\newtheorem{example}[theorem]{Example}
\newcommand{\sech}{\operatorname{sech}}
\newcommand{\polylog}{\operatorname{polylog}}
\definecolor{webgreen}{rgb}{0,.5,0}
\definecolor{webbrown}{rgb}{.6,0,0}
\newcommand{\seqnum}[1]{\href{http://www.research.att.com/cgi-bin/access.cgi/as/~njas/sequences/eisA.cgi?Anum=#1}{\underline{#1}}}
\begin{document}

\begin{center}
\vskip 1cm{\LARGE\bf Eulerian-Dowling Polynomials as Moments, Using Riordan Arrays} \vskip 1cm \large
Paul Barry\\
School of Science\\
Waterford Institute of Technology\\
Ireland\\
\href{mailto:pbarry@wit.ie}{\tt pbarry@wit.ie}
\end{center}
\vskip .2 in

\begin{abstract} Using the theory of exponential Riordan arrays, we show that the Eulerian-Dowling polynomials are moments for a paramaterized family of orthogonal polynomials. In addition, we show that the related Dowling and the Tanny-Dowling polynomials are also moments for appropriate families of orthogonal polynomials. We provide continued fraction generating functions and Hankel transforms for these polynomials. \end{abstract}

\section{Introduction}

The authors Benoumhani \cite{Ben1, Ben2} and Rahmani \cite{Rahmani} have studied families of polynomials associated with the class of geometric lattices introduced by Dowling \cite{Dowling}. These are the Dowling, the Tanny-Dowling, and the Eulerian-Dowling polynomials.

In this note, we show that these polynomials can be studied within the context of exponential Riordan arrays. They then present themselves as moments of families of orthogonal polynomials \cite{Gautschi, Chihara, Szego}. We describe the coefficient arrays of the associated orthogonal polynomials in terms of exponential Riordan arrays, and exploiting the link between the tri-diagonal production matrices \cite{ProdMat1, ProdMat2, Peart} of the moment matrices and continued fractions \cite{Wall}, we determine the Hankel transforms \cite{Kratt, Layman} of these polynomials. The articles \cite{Euler1, Euler2} use similar techniques to describe the Eulerian polynomials and a special class of generalized Eulerian polynomials as moment sequences.

\section{Essentials of exponential Riordan arrays}
We briefly summarize the elements of the theory of exponential Riordan arrays \cite{Book, SGWW, Survey} that we will require.
An exponential Riordan array is defined by two power series
$$g(x)=1+g_1 \frac{x}{1!} + g_2 \frac{x^n}{n!}+\ldots = \sum_{n=0}^{\infty} g_n \frac{x^n}{n!},$$
and
$$f(x)=x+f_2 \frac{x^2}{2!} + f_3 \frac{x^3}{3!} + \ldots = \sum_{n=0}^{\infty} f_n \frac{x^n}{n!},$$
where $g_0=1$, $f_0=0$ and $f_1=1$. (It is possible to relax the conditions $f_1=1$ and $g_0=1$ to $f_1 \ne 0$ and $g_0 \ne 0$, but we do not do this here for simplicity).
The matrix $M$ with $(n,k)$-th element
$$\frac{n!}{k!} [x^n] g(x)f(x)^k$$ is then regarded as a concrete realization of the exponential Riordan array defined by the pair $(g(x), f(x))$. Here, $[x^n]$ is the operator that extracts the coefficient of $x^n$ \cite{MC}. We often denote this matrix by $[g(x), f(x)]$.
Associated with the pair $(g(x), f(x))$ of power series are two other power series,
$$A(x)=f'(\bar{f}(x))$$ and
$$Z(x)=\frac{g'(\bar{f}(x))}{g(\bar{f})(x)},$$ where
the power series $\bar{f}(x)$ is the compositional inverse or reversion of $f(x)$. Thus we have
$$f(\bar{f}(x))=x,\quad\quad \bar{f}(f(x))=x.$$
The matrix $P_M$ with bivariate generating function
$$e^{xy}(Z(x)+yA(x))$$ is called the production matrix of $M$. (Note that $A(x)$ and $Z(x)$ are also referred to as $r(x)$ and $c(x)$ in the literature).  It is equal to
$$P_M=M^{-1} \bar{M}$$ where $\bar{M}$ is the matrix $M$ with its top row removed.
The central fact that we use in this note is the following.
If $Z(x)$ and $A(x)$ are of the form
$$Z(x)=\alpha+\beta x, \quad\quad A(x)=1+\gamma x + \delta x^2,$$ then the production matrix will be tri-diagonal, corresponding to the family of orthogonal polynomials $P_n(x)$ that satisfy the three-term recurrence
$$P_n(x)=(x-(\alpha+(n-1)\gamma)P_{n-1}(x)-((n-1)\beta+(n-1)(n-2)\delta)P_{n-2}(x),$$
with $P_0(x)=1$ and $P_1(x)=x-\alpha$. The inverse matrix $M^{-1}$ is then the coefficient array of these polynomials. Thus if $m_{n,k}^*$ is the general $(n,k)$-th element of $M^{-1}$, we have
$$P_n(x)=\sum_{k=0}^n m_{n,k}^* x^k.$$

The first column elements of $M$ then represent the moments of the family of orthogonal polynomials $P_n(x)$.

\section{Dowling polynomials and Tanny-Dowling polynomials as moments}
We define the Whitney numbers $w_m(n,k)$ and $W_m(n,k)$ of the first and second kind, respectively, of Dowling lattices, by
$$\sum_{n=0}^{\infty} w_m(n,k)\frac{z^n}{n!}=\frac{(1+mz)^{-\frac{1}{m}}(\ln(1+mz))^k}{m^k k!},$$ and
$$\sum_{n=0}^{\infty} W_m(n,k)\frac{x^n}{n!}=\frac{e^z}{m^k k!}(e^{mz}-1)^k.$$
In terms of exponential Riordan arrays, this means that the Whitney numbers of the first kind $w_m(n,k)$ of the Dowling lattices are the elements of the exponential Riordan array
$$\left[\frac{1}{(1+mz)^\frac{1}{m}}, \frac{1}{m}\ln(1+mz)\right].$$
Similarly, the Whitney numbers of the second kind $W_m(n,k)$ are elements of the inverse exponential Riordan array
$$\left[e^z, \frac{1}{m} (e^{mz}-1)\right]=\left[\frac{1}{(1+mz)^\frac{1}{m}}, \frac{1}{m}\ln(1+mz)\right]^{-1}.$$
Explicitly, we have
$$w_m(n,k)=\sum_{i=0}^n (-1)^{i-k}\binom{i}{k}m^{n-i}s(n,i),$$ and
$$W_m(n,k)=\sum_{i=k}^n \binom{n}{i}m^{i-k}S(i,k)=\frac{1}{m^k k!}\sum_{i=0}^k \binom{k}{i}(-1)^{k-i}(mi+1)^n,$$ where $s(n,k)$ and $S(n,k)$ are the Stirling numbers of the first and second kind, respectively.

We have
$$S(n,k)=\frac{1}{k!} \sum_{j=0}^k (-1)^{k-j}\binom{k}{j}j^n.$$

Now the Stirling numbers $S(n,k)$ of the second kind are the elements of the exponential Riordan array
$$\left[1, e^x-1\right].$$ Hence we obtain that the Whitney numbers of the second kind are the elements of the
exponential Riordan array
$$\left[e^x, x\right]\cdot \left[1, e^{mx}-1\right]= \left[e^x,\frac{1}{m}( e^{mx}-1)\right].$$

Note that when $m=0$, we have $W_0(n,k)=\binom{n}{k}$ with corresponding exponential Riordan array the binomial matrix $[e^x,x]$.

\begin{definition} The Dowling polyomials $D_m(n,x)$ are defined by
$$D_m(n,x)=\sum_{k=0}^n W_m(n,k)x^k.$$
\end{definition}

\begin{lemma}
We have
$$D_m(n,x)=[x^n] e^t e^{\frac{(e^{mt}-1)x}{m}}.$$
\end{lemma}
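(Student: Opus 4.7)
The plan is to read the claim directly off the bivariate exponential generating function of the Riordan array of Whitney numbers of the second kind, which the preceding paragraph has already identified as
$$\bigl[e^z,\tfrac{1}{m}(e^{mz}-1)\bigr].$$
By the very definition of an exponential Riordan array $[g(z),f(z)]$, its entries $W_m(n,k)$ satisfy
$$\sum_{n\ge 0}\sum_{k=0}^{n} W_m(n,k)\,y^k\,\frac{t^n}{n!} \;=\; g(t)\,e^{y f(t)} \;=\; e^{t}\exp\!\left(\tfrac{y(e^{mt}-1)}{m}\right).$$
This is the only nontrivial input; everything after is bookkeeping.

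Next I would specialize $y$ to $x$ and interchange the order of summation so that the coefficient of $t^n/n!$ is collected. By the definition $D_m(n,x)=\sum_{k=0}^n W_m(n,k) x^k$, the resulting identity reads
$$\sum_{n\ge 0} D_m(n,x)\,\frac{t^n}{n!} \;=\; e^{t}\exp\!\left(\tfrac{(e^{mt}-1)x}{m}\right),$$
which, upon extracting the coefficient of $t^n/n!$ on both sides, gives the lemma. (I read the printed $[x^n]$ on the right-hand side as the exponential-coefficient operator $n!\,[t^n]$ acting in the series variable $t$, which is the only interpretation consistent with the exponential Riordan framework of Section~2.)

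There is no substantial obstacle: the work has been front-loaded into the identification of the exponential Riordan array carrying the Whitney numbers of the second kind, and the lemma is just the diagonal specialization $y\mapsto x$ of its bivariate generating function. The one point worth highlighting in the write-up is the notational reconciliation between $[x^n]$ as used for ordinary-coefficient extraction in Section~2 and the exponential-coefficient extraction required here.
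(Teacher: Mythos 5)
Your proof is correct and takes essentially the same route as the paper's one\-line argument: the paper applies the fundamental theorem of exponential Riordan arrays to the vector with e.g.f.\ $e^{tx}$, computing $\left[e^t,\tfrac{1}{m}(e^{mt}-1)\right]\cdot e^{tx}=e^t e^{x(e^{mt}-1)/m}$, which is exactly your bivariate generating function $g(t)e^{yf(t)}$ specialized at $y=x$. Your observation that the printed $[x^n]$ must be read as the exponential coefficient extractor $n!\,[t^n]$ is the correct reconciliation of the paper's notation with the identity $\sum_{n\ge 0} D_m(n,x)\frac{z^n}{n!}=e^z e^{(e^{mz}-1)x/m}$ stated immediately afterwards.
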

\begin{proof}
Regarded as an infinite vector, the sequence $(D_m(n,x))_{n\ge 0}$ has generating function given by
$$  \left[e^t,\frac{1}{m}( e^{mt}-1)\right].e^{tx}=e^t e^{\frac{(e^{mt}-1)x}{m}}.$$
\end{proof}
Thus we have
$$\sum_{n=0}^{\infty} D_m(n,x)\frac{z^n}{n!} = e^z e^{\frac{(e^{mz}-1)x}{m}}.$$

\begin{proposition} The Dowling polynomials $D_m(n,x)$ form the moments for a family of orthogonal polynomials.
\end{proposition}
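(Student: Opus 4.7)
The plan is to exhibit an explicit exponential Riordan array $M=[g(t),f(t)]$ whose first column realises the sequence $(D_m(n,x))_{n\ge 0}$ (with $x$, $m$ treated as parameters) and whose production matrix is tri-diagonal in the sense of Section~2. By the criterion stated there, this automatically identifies $M$ as the moment matrix of an associated family of orthogonal polynomials in a dummy variable $y$, and in particular forces $(D_m(n,x))_{n\ge 0}$ to be the moment sequence.

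The lemma above tells us that the first-column EGF we must match is $e^t e^{x(e^{mt}-1)/m}$. Motivated further by the fact that the internal function $(e^{mt}-1)/m$ already appears in the Whitney array $\bigl[e^t,(e^{mt}-1)/m\bigr]$, I would simply take
$$g(t)=e^t\, e^{x(e^{mt}-1)/m},\qquad f(t)=\frac{e^{mt}-1}{m}.$$
The required normalisations $g(0)=1$, $f(0)=0$, $f'(0)=1$ are immediate, and the lemma guarantees that the first column of $M$ is $(D_m(n,x))_{n\ge 0}$.

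Next I would verify tri-diagonality of the production matrix by direct computation of $A$ and $Z$. Since $\bar f(t)=\frac{1}{m}\ln(1+mt)$, we have $e^{m\bar f(t)}=1+mt$, so
$$A(t)=f'(\bar f(t))=e^{m\bar f(t)}=1+mt.$$
From $\log g(t)=t+x(e^{mt}-1)/m$ we obtain $g'(t)/g(t)=1+xe^{mt}$, hence
$$Z(t)=\frac{g'(\bar f(t))}{g(\bar f(t))}=1+x\,e^{m\bar f(t)}=(1+x)+xm\,t.$$
Thus $Z(t)=\alpha+\beta t$ and $A(t)=1+\gamma t+\delta t^2$ with $\alpha=1+x$, $\beta=xm$, $\gamma=m$, $\delta=0$. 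Invoking the central fact from Section~2, the production matrix of $M$ is tri-diagonal, $M^{-1}$ is the coefficient array of the orthogonal polynomial sequence satisfying
$$P_n(y)=\bigl(y-(1+x+(n-1)m)\bigr)P_{n-1}(y)-(n-1)xm\,P_{n-2}(y),\quad P_0=1,\ P_1=y-(1+x),$$
and $(D_m(n,x))_{n\ge 0}$ is the corresponding moment sequence.

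The only real creative step is the choice of $f(t)$; once it is made, the rest of the verification is mechanical. I expect no genuine obstacle, since the choice $f(t)=(e^{mt}-1)/m$ is essentially forced: its reversion $\bar f$ is exactly what is needed to collapse both $g'/g$ and $f'$ evaluated at $\bar f$ into polynomials of the required low degrees in $t$, which is precisely what tri-diagonality demands.
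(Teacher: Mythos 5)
Your proposal is correct and follows essentially the same route as the paper: it uses the exponential Riordan array $\left[e^t e^{x(e^{mt}-1)/m},\ \tfrac{1}{m}(e^{mt}-1)\right]$ and checks tri-diagonality of its production matrix, arriving at the same three-term recurrence. The only difference is that you explicitly derive $A(t)=1+mt$ and $Z(t)=(1+x)+xmt$, whereas the paper simply states the resulting production-matrix generating function $e^{zy}(1+x+xmz+y(1+mz))$.
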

\begin{proof} The exponential Riordan array
$$\left[e^z e^{\frac{(e^{mz}-1)x}{m}}, \frac{1}{m}(e^{mz}-1)\right]$$ has a tri-diagonal production matrix with generating function
$$e^{zy}(1+x+xmz+y(1+mz)).$$
Thus the polynomial sequence $D_m(n,x)$ constitutes the moment sequence for the polynomials $P_n^{(m)}(z)$ that have coefficient array
$$\left[e^z e^{\frac{(e^{mz}-1)x}{m}}, \frac{1}{m}(e^{mz}-1)\right]^{-1}=\left[\frac{e^{-xz}}{(1+mz)^{\frac{1}{m}}}, \frac{1}{m}\ln(1+mz)\right].$$
The orthogonal polynomials $P_n^{(m)}(z)$ then satisfy the following three term recurrence.
$$P_n^{(m)}(z)=(z-(1+x+(n-1)m))P_{n-1}^{(m)}(z)-(n-1)mx P_{n-2}^{(m)}(z),$$ with
$P_0^{(m)}(z)=1$ and $P_1^{(m)}(z)=z-(1+x)$.
\end{proof}

\begin{definition} The Tanny-Dowling polynomials $\mathcal{F}_m(n,x)$ are defined by
$$\mathcal{F}_m(n,x)=\sum_{k=0}^n k! W_m(n,k)x^k.$$
\end{definition}

\begin{proposition} The Tanny-Dowling polynomials $\mathcal{F}_m(n,x)$ are the moments for a family of orthogonal polynomials.
\end{proposition}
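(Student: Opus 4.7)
The plan is to follow the same strategy used for the Dowling polynomials. First, I would identify the EGF of the Tanny--Dowling sequence. Since the Whitney numbers of the second kind are the entries of the exponential Riordan array $[e^z,(e^{mz}-1)/m]$, applying this array to the input column sequence $(k!\,x^k)_{k\ge 0}$, whose EGF is $1/(1-xt)$, produces
\[
\sum_{n\ge 0}\mathcal{F}_m(n,x)\frac{z^n}{n!}=\frac{e^z}{1-\frac{x(e^{mz}-1)}{m}}.
\]

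Second, I would propose the exponential Riordan array
\[
M=\left[\frac{e^z}{1-\frac{x(e^{mz}-1)}{m}},\ \frac{(e^{mz}-1)/m}{1-\frac{x(e^{mz}-1)}{m}}\right],
\]
whose first column is precisely this EGF. Writing $t=(e^{mz}-1)/m$, the chosen $f(z)=t/(1-xt)$ is the image of the Dowling kernel $t$ under the linear-fractional map $w\mapsto w/(1-xw)$. The motivation is that retaining the Dowling-case choice $f=t$ makes $Z(u)=G'(\bar f(u))/G(\bar f(u))$ a nonpolynomial rational function, and a short calculation shows that dressing $f$ by the same denominator as $G$ is exactly the fix that restores the polynomial shape of both $A$ and $Z$.

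Third, I would verify that the production matrix of $M$ is tri-diagonal by computing $A(u)=f'(\bar f(u))$ and $Z(u)=G'(\bar f(u))/G(\bar f(u))$. Inverting $f=t/(1-xt)$ gives $t=f/(1+xf)$, hence
\[
\bar f(u)=\tfrac{1}{m}\ln\!\bigl((1+(m+x)u)/(1+xu)\bigr),
\]
and at $z=\bar f(u)$ one has the clean substitutions $1-xt=1/(1+xu)$ and $1+mt=(1+(m+x)u)/(1+xu)$. Feeding these into
\[
f'(z)=\frac{1+mt}{(1-xt)^2},\qquad \frac{G'(z)}{G(z)}=1+\frac{x(1+mt)}{1-xt},
\]
one obtains
\[
A(u)=(1+xu)(1+(m+x)u),\qquad Z(u)=(1+x)+x(m+x)\,u,
\]
which are of the required polynomial form.

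Finally, reading off $\alpha=1+x$, $\beta=x(m+x)$, $\gamma=m+2x$, $\delta=x(m+x)$ from the general tri-diagonal framework of Section~2 identifies $\mathcal{F}_m(n,x)$ as the moment sequence for the family of orthogonal polynomials $Q_n^{(m)}(z)$ satisfying
\[
Q_n^{(m)}(z)=\bigl(z-(1+x+(n-1)(m+2x))\bigr)Q_{n-1}^{(m)}(z)-(n-1)^2 x(m+x)\,Q_{n-2}^{(m)}(z),
\]
with $Q_0^{(m)}(z)=1$ and $Q_1^{(m)}(z)=z-(1+x)$. The main obstacle is guessing the correct $f(z)$; once the right Riordan array is in hand, the verification that $A(u)$ and $Z(u)$ have the polynomial forms demanded by the tri-diagonal criterion is a routine if slightly intricate calculation.
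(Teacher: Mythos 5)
Your proposal is correct and follows essentially the same route as the paper: the same generating function $\frac{me^z}{m+x-xe^{mz}}$, the identical exponential Riordan array $M$ (written in an equivalent form), the same tri-diagonal production matrix with $Z(u)=(1+x)+x(x+m)u$ and $A(u)=1+(2x+m)u+x(x+m)u^2$, and the same three-term recurrence, since $1+x+(n-1)(m+2x)=1+(2n-1)x+(n-1)m$. In fact you supply more detail than the paper does, by deriving the generating function from the action of $[e^z,(e^{mz}-1)/m]$ on $1/(1-xt)$ and by computing $A$ and $Z$ explicitly rather than merely asserting the form of the production matrix.
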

\begin{proof}
The Tanny-Dowling polynomials $\mathcal{F}_m(n,x)$ have a generating function given by
$$\frac{m e^z}{m+x-xe^{mz}}.$$
The exponential Riordan array
$$M=\left[\frac{m e^z}{m+x-xe^{mz}}, \frac{e^{mz}-1}{m+x-xe^{mz}}\right]$$ has a tri-diagonal production matrix, with bivariate generating function
$$e^{yz}(1+x+x(x+m)z+y(1+(2x+m)z+x(x+m)z^2)).$$ This implies that the polynomials $\mathcal{F}_m(n,x)$ are the moment sequence for the family of orthogonal polynomials whose coefficient array is given by
$$\left[\frac{m e^z}{m+x-xe^{mz}}, \frac{e^{mz}-1}{m+x-xe^{mz}}\right]^{-1}=\left[\frac{(1+xz)^{\frac{1-m}{m}}}{(1+z(m+x))^{1/m}}, \frac{1}{m}\ln\left(\frac{1+z(m+x)}{1+zx}\right)\right].$$
The corresponding family of orthogonal polynomials $Q_n^{(m)}(z)$  satisfies the following three-term recurrence.
$$Q_n^{(m)}(z)=(z-(1+(2n-1)x+(n-1)m))Q_{n-1}^{(m)}(z)-(n-1)^2 x(x+m)Q_{n-2}^{(m)}(z),$$ with
$Q_0^{(m)}(z)=1$ and $Q_1^{(m)}(z)=z-(1+x)$.
\end{proof}
\section{The Eulerian-Dowling polynomials as moments}
 The Eulerian-Dowling polynomials are defined \cite{Rahmani} as follows.
\begin{definition} The Eulerian-Dowling polynomials $\mathcal{A}_m(n,x)$ are defined by
$$\mathcal{A}_m(n,x)=\sum_{k=0}^n k! W_m(n,k)(x-1)^{n-k}.$$
\end{definition}

\begin{proposition}
The Eulerian-Dowling polynomials $\mathcal{A}_m(n,x)$ are the moments for a family of orthogonal polynomials.
\end{proposition}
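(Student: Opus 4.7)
The plan is to follow the template of the two previous propositions: derive the exponential generating function of $\mathcal{A}_m(n,x)$, exhibit an exponential Riordan array whose first column has this EGF, and verify that its production matrix is tri-diagonal.

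First I will derive the EGF of $\mathcal{A}_m(n,x)$. Since $(x-1)^{n-k}=(x-1)^n(x-1)^{-k}$, the identity
$$\mathcal{A}_m(n,x)=(x-1)^n\,\mathcal{F}_m\!\left(n,\tfrac{1}{x-1}\right)$$
holds as a polynomial identity in $x$. Substituting $z\mapsto(x-1)z$ and $x\mapsto 1/(x-1)$ in the Tanny--Dowling EGF $\frac{me^z}{m+x-xe^{mz}}$ and clearing the inner fraction yields
$$\sum_{n\ge 0}\mathcal{A}_m(n,x)\frac{z^n}{n!}=\frac{m(x-1)e^{(x-1)z}}{m(x-1)+1-e^{m(x-1)z}}.$$

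Next, guided by the shape of the array used for $\mathcal{F}_m$, I will consider the exponential Riordan array
$$M=\left[\frac{m(x-1)e^{(x-1)z}}{m(x-1)+1-e^{m(x-1)z}},\; \frac{e^{m(x-1)z}-1}{m(x-1)+1-e^{m(x-1)z}}\right],$$
whose first column has EGF equal to $\sum_{n}\mathcal{A}_m(n,x)z^n/n!$ by construction. Writing $g,f$ for its two components and setting $t:=x-1$, $h(z):=mt+1-e^{mtz}$, the relations $f(z)=mt/h(z)-1$ and $g(z)=mt\,e^{tz}/h(z)$ combine to give the useful identity $g(z)=e^{tz}(f(z)+1)$, which will streamline the production-matrix computation.

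The only real obstacle is to verify tri-diagonality of the production matrix. I will compute the reversion from $f(\bar f(z))=z$, obtaining $h(\bar f(z))=mt/(z+1)$, hence
$$\bar f(z)=\frac{1}{m(x-1)}\ln\!\frac{1+(m(x-1)+1)z}{1+z}.$$
The derivative $f'(z)=(mt)^2e^{mtz}/h(z)^2$, evaluated at $\bar f(z)$, simplifies to the clean polynomial
$$A(z)=f'(\bar f(z))=(z+1)\bigl((m(x-1)+1)z+1\bigr)=1+(m(x-1)+2)z+(m(x-1)+1)z^2,$$
while $g=e^{tz}(f+1)$ forces $Z(z)=t+A(z)/(z+1)=x+(m(x-1)+1)z$. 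Both $Z$ and $A$ have the form $\alpha+\beta z$ and $1+\gamma z+\delta z^2$ required in Section~2, so the production matrix is tri-diagonal, with bivariate generating function
$$e^{yz}\bigl(x+(m(x-1)+1)z+y\bigl(1+(m(x-1)+2)z+(m(x-1)+1)z^2\bigr)\bigr).$$

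Reading off $\alpha=x$, $\beta=\delta=m(x-1)+1$, $\gamma=m(x-1)+2$, the associated orthogonal polynomials $\mathcal{Q}_n^{(m)}(z)$ then satisfy the three-term recurrence
$$\mathcal{Q}_n^{(m)}(z)=\bigl(z-x-(n-1)(m(x-1)+2)\bigr)\mathcal{Q}_{n-1}^{(m)}(z)-(n-1)^2(m(x-1)+1)\,\mathcal{Q}_{n-2}^{(m)}(z),$$
with $\mathcal{Q}_0^{(m)}=1$ and $\mathcal{Q}_1^{(m)}(z)=z-x$, exhibiting $\mathcal{A}_m(n,x)$ as the corresponding moment sequence and completing the proof.
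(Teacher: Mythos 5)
Your proof is correct and follows essentially the same route as the paper: exhibit the exponential Riordan array whose first column has the EGF of $\mathcal{A}_m(n,x)$, compute $A(z)$ and $Z(z)$, and conclude tri-diagonality of the production matrix (your derivation of the EGF from the Tanny--Dowling case is a small bonus; the paper just cites Rahmani). Do not be alarmed that your $Z(z)=x+(m(x-1)+1)z$ disagrees with the paper's stated $Z(z)=x+(m(x-1)+2)z$ (and that your $P_{n-2}$ coefficient $(n-1)^2(m(x-1)+1)$ disagrees with the paper's $(n-1)^2m(x-1)+m(n-1)$): yours are the correct values, as one checks directly from $\mathcal{A}_m(2,x)=(x-1)^2+(m+2)(x-1)+2=x^2+\bigl(m(x-1)+1\bigr)$ against $\mu_2=\alpha^2+\beta$, and as the paper itself implicitly confirms in Section 5, where the continued fraction has first numerator $(m(x-1)+1)z^2$ and the Hankel transform is a power of $m(x-1)+1$.
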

\begin{proof} It is known \cite{Rahmani} that the Eulerian-Dowling polynomials have generating function given by
$$\frac{m(1-x)e^{z(x-1)}}{e^{mz(x-1)}-(mx-m+1)}.$$
However, the exponential Riordan array
$$\left[\frac{m(1-x)e^{z(x-1)}}{e^{mz(x-1)}-(mx-m+1)}, \frac{1-e^{mz(x-1)}}{e^{mz(x-1)}-(mx-m+1)}\right]$$ has a production matrix with generating function
$$e^{yz}(x+(m(x-1)+2)z+y(1+(m(x-1)+2)z+(m(x-1)+1)z^2)),$$
that is tri-diagonal. Thus the Eulerian-Dowling polynomials are moments for the orthogonal polynomials that have
$$\left[\frac{m(1-x)e^{z(x-1)}}{e^{mz(x-1)}-(mx-m+1)}, \frac{1-e^{mz(x-1)}}{e^{mz(x-1)}-(mx-m+1)}\right]^{-1}$$ or
$$\left[\frac{(1+z)^{\frac{1-m}{m}}}{(1+z(1-m)+mxz)^{1/m}}, \frac{\ln(1+z(1-m)+mxz)-\ln(1+z)}{m(x-1)}\right]$$ as coefficient array. These polynomials therefore satisfy the following three-term recurrence.
$$P_n(z)=(z-(x+(n-1)(m(x-1)+2))P_{n-1}(z)-((n-1)^2 m(x-1)+m(n-1))P_{n-2}(z),$$ with
$P_0(z)=1$, $P_1(z)=z-x$.
\end{proof}

\section{Continued fractions and Hankel transforms}
The Eulerian-Dowling polynomials $\mathcal{A}_m(n,x)$, as moments, have the following continued fraction expression for their generating function $\sum_{n=0}^{\infty}\mathcal{A}_m(n,x)z^n$.
\begin{scriptsize}
$$ \cfrac{1}{1-xz-
\cfrac{(m(x-1)+1)z^2}{1-(x(m+1)-(m-2))z-
\cfrac{4(m(x-1)+1)z^2}{1-(x(2m+1)-2(m-2))z-
\cfrac{9(m(x-1)+1)z^2}{1-(x(3m+1)-3(m-2))z-\cdots}}}}.$$
\end{scriptsize}
In particular, we obtain that the Hankel transform of the sequence $\mathcal{A}_m(n,x)$ is given by
$$h_n = (m(x-1)+1)^{\binom{n+1}{2}} \prod_{k=0}^n k!^2.$$

In a similar fashion, we see that the Dowling polynomials $D_m(n,x)$ have a generating function given by
$$\cfrac{1}{1-(x+1)z-
\cfrac{mxz^2}{1-(x+m+1)z-
\cfrac{2mxz^2}{1-(x+2m+1)z-
\cfrac{3mxz^2}{1-(x+3m+1)z-\cdots}}}}.$$
This implies that the Hankel transform of the Dowling polynomials is given by
$$h_n = (mx)^{\binom{n+1}{2}} \prod_{k=0}^n k!.$$

The Tanny-Dowling polynomials $\mathcal{F}_m(n,x)$ have a generating function given by
$$\cfrac{1}{1-(x+1)z-
\cfrac{x(x+m)z^2}{1-(3x+m+1)z-
\cfrac{4x(x+m)z^2}{1-(5x+2m+1)z-
\cfrac{9x(x+m)z^2}{1-(7x+3m+1)z-\cdots}}}}.$$
Thus the Hankel transform of the sequence $\mathcal{F}_m(n,x)$ is given by
$$h_n =(x(x+m))^{\binom{n+2}{2}}\prod_{k=0}^n k!^2.$$

\section{Bivariate geometric polynomials and the Tanny-Dowling polynomials}
The geometric polynomials $\omega_n(x)$ are defined by
$$\omega_n(x)=\sum_{k=0}^n k! S(n,k)x^k.$$
We define the bivariate geometric polynomials $\omega_n(x,y)$ by
$$\omega_n(x,y)=\sum_{k=0}^n k! S(n,k)x^ky^{n-k}.$$
We then have the following result.
\begin{proposition}
The bivariate geometric polynomials $\omega_n(z,m)$ are moments for the family of orthogonal polynomials that have coefficient array given by the exponential Riordan array
$$\left[\frac{1}{1+zx}, \frac{1}{m}\ln\left(\frac{1+z(m+x)}{1+xz}\right)\right].$$
\end{proposition}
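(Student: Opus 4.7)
The plan mirrors the argument used for the Tanny-Dowling polynomials: I will exhibit an exponential Riordan array $M$ whose first column enumerates the bivariate geometric polynomials $\omega_n(z,m)$, verify that its production matrix is tri-diagonal, and then read off $M^{-1}$ as the coefficient array in the statement. To avoid a clash of letters I write $t$ for the formal variable of the Riordan array below, so what appears in the statement's array as $z$ corresponds to $t$ here, and the polynomial variable $z$ in $\omega_n(z,m)$ plays the role of the parameter labelled $x$ in the proposition's array.

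The first step is to produce an exponential generating function for $\omega_n(z,m)=\sum_k k!\,S(n,k)\,z^k m^{n-k}$. Interchanging the order of summation and using the standard egf $\sum_n S(n,k)\,t^n/n! = (e^t-1)^k/k!$ collapses the $k$-sum into a geometric series, yielding
$$\sum_{n\ge 0}\omega_n(z,m)\frac{t^n}{n!}=\frac{m}{m+z-ze^{mt}}.$$
I take this as the $g$-component of $M$ and, in parallel with the Tanny-Dowling construction, set $f(t)=(e^{mt}-1)/(m+z-ze^{mt})$. Solving $e^{mt}=(1+w(m+z))/(1+zw)$ for $w$ gives the compositional inverse
$$\bar f(t)=\frac{1}{m}\ln\!\left(\frac{1+t(m+z)}{1+tz}\right),$$
which matches the second slot of the proposition's array. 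Clearing denominators in the relation $e^{m\bar f(t)}=(1+t(m+z))/(1+tz)$ yields the key simplification $m+z-ze^{m\bar f(t)}=m/(1+tz)$, and hence $1/g(\bar f(t))=1+tz$. This confirms $M^{-1}=[\,1/(1+tz),\,\bar f(t)\,]$, exactly the array in the proposition.

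It remains to show that the production matrix of $M$ is tri-diagonal. Using the same simplification, the quotient-rule computations of $A(t)=f'(\bar f(t))$ and $Z(t)=g'(\bar f(t))/g(\bar f(t))$ reduce to short rational manipulations and yield $A(t)=1+(m+2z)t+z(m+z)t^2$ and $Z(t)=z+z(m+z)t$. Since $Z$ is linear and $A$ is quadratic, the framework of Section~2 applies and $\omega_n(z,m)$ are the moments of the orthogonal family whose coefficient array is $M^{-1}$. The one genuinely non-routine step in the whole outline is this verification that $A$ and $Z$ are of the low degrees required; everything else is bookkeeping in exact parallel with the earlier Tanny-Dowling case.
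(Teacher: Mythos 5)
Your argument is correct and takes essentially the same route as the paper: exhibit the moment array $\left[\frac{m}{m+z-ze^{mt}}, \frac{e^{mt}-1}{m+z-ze^{mt}}\right]$, verify that its production matrix is tri-diagonal (your $A(t)=1+(m+2z)t+z(m+z)t^2$ and $Z(t)=z+z(m+z)t$ agree with the coefficients in the paper's continued fraction), and invert to get the stated coefficient array. The only blemish is the line asserting $1/g(\bar f(t))=1+tz$, which should read $g(\bar f(t))=1+tz$, i.e.\ $1/g(\bar f(t))=1/(1+tz)$; the inverse array $\left[\frac{1}{1+tz},\bar f(t)\right]$ that you then write down is nevertheless the correct one.
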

We have
$$\left[\frac{1}{1+zx}, \frac{1}{m}\ln\left(\frac{1+z(m+x)}{1+xz}\right)\right]^{-1}=\left[\frac{m}{m+x-xe^{mz}}, \frac{e^{mz}-1}{m+x-xe^{mz}}\right].$$
Thus the bivariate geometric polynomials $\omega_n(x,m)$ have exponential generating function
$$\frac{m}{m+x-xe^{mz}}.$$
Using the production matrix of the moment array $\left[\frac{m}{m+x-xe^{mz}}, \frac{e^{mz}-1}{m+x-xe^{mz}}\right]$ we also find that the bivariate geometric polynomials $\omega_n(x,m)$ have a generating function given by the following continued fraction.
$$\cfrac{1}{1-
\cfrac{xz}{1-
\cfrac{(x+m)z}{1-
\cfrac{2xz}{1-
\cfrac{2(x+m)z}{1-
\cfrac{3xz}{1-
\cfrac{3(x+m)z}{1-\cdots}}}}}}},$$ or
$$\cfrac{1}{1-xz-
\cfrac{x(x+m)z^2}{1-(3x+m)z-
\cfrac{4x(x+m)z^2}{1-(5x+2m)z-
\cfrac{9x(x+m)z^2}{1-(7x+3m)z-\cdots}}}}.$$

In particular, the Hankel transform of the bivariate geometric polynomials is given by 
$$h_n = (x(x+m))^{\binom{n+1}{2}} \prod_{k=0}^n k!^2.$$ 

Now since
$$[e^z, z] \cdot \left[\frac{m}{m+x-xe^{mz}}, \frac{e^{mz}-1}{m+x-xe^{mz}}\right]=\left[\frac{me^z}{m+x-xe^{mz}}, \frac{e^{mz}-1}{m+x-xe^{mz}}\right],$$ we obtain the following result.
\begin{corollary}
The Tanny-Dowling polynomials $\mathcal{F}_m(n,x)$ are given by the binomial transform of the bivariate geometric polynomials $\omega_n(x,m)$. That is,
$$\mathcal{F}_m(n,x)=\sum_{k=0}^n \binom{n}{k}\omega_k(x,m).$$
\end{corollary}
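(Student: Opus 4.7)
The plan is to read the corollary directly off the matrix factorization displayed immediately before it, using two standard facts about exponential Riordan arrays: the product rule and the interpretation of the first column as a generating function.

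First I would note that the binomial matrix $[e^z, z]$ has $(n,k)$ entry $\frac{n!}{k!}[x^n] e^x x^k = \binom{n}{k}$, so left multiplication by $[e^z, z]$ applied to any column vector $(c_k)_{k\ge 0}$ produces the binomial transform $(\sum_{k=0}^n \binom{n}{k} c_k)_{n\ge 0}$. Thus it suffices to identify the first columns of the two exponential Riordan arrays appearing in the displayed identity as $\omega_n(x,m)$ and $\mathcal{F}_m(n,x)$ respectively.

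Next I would invoke the basic fact that for an exponential Riordan array $[g(z), f(z)]$ the $0$-th column has exponential generating function $g(z)$ (since the $(n,0)$ entry is $n! [z^n] g(z) f(z)^0 = n! [z^n] g(z)$). Applied to the right-hand factor $\left[\frac{m}{m+x-xe^{mz}}, \frac{e^{mz}-1}{m+x-xe^{mz}}\right]$, whose first-column EGF was already computed in the preceding proposition to be $\frac{m}{m+x-xe^{mz}}$, this identifies the first column as $(\omega_n(x,m))_{n\ge 0}$. Applied to the product $\left[\frac{me^z}{m+x-xe^{mz}}, \frac{e^{mz}-1}{m+x-xe^{mz}}\right]$, whose first-column EGF equals the generating function $\frac{me^z}{m+x-xe^{mz}}$ established for the Tanny-Dowling polynomials in Section~3, this identifies the first column as $(\mathcal{F}_m(n,x))_{n\ge 0}$.

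Finally I would take the first column of both sides of the matrix identity $[e^z,z]\cdot M' = M$, where $M'$ and $M$ are the two arrays above. The left-hand side, by the binomial transform observation, produces $\sum_{k=0}^n \binom{n}{k} \omega_k(x,m)$; the right-hand side produces $\mathcal{F}_m(n,x)$. This gives the corollary. The displayed factorization itself is routine to check from the exponential Riordan product rule $[g_1,f_1]\cdot[g_2,f_2] = [g_1\cdot (g_2\circ f_1), f_2\circ f_1]$ with $f_1(z)=z$, so no real obstacle arises; the only point requiring care is making sure one is citing the correct EGF for $\omega_n(x,m)$, which the paper has already established.
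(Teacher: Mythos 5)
Your proposal is correct and follows essentially the same route as the paper: the paper derives the corollary directly from the displayed factorization $[e^z,z]\cdot\left[\frac{m}{m+x-xe^{mz}}, \frac{e^{mz}-1}{m+x-xe^{mz}}\right]=\left[\frac{me^z}{m+x-xe^{mz}}, \frac{e^{mz}-1}{m+x-xe^{mz}}\right]$, leaving implicit exactly the steps you spell out (first columns as EGFs, binomial matrix as binomial transform). Your write-up simply makes the paper's one-line argument explicit.
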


We can define a modified version of the bivariate geometric polynomials as follows.
$$\tilde{\omega}_n(x,m)=\sum_{k=0}^n (k+1)!S(n,k)x^k m^{n-k}.$$
We can then show the following.
\begin{proposition} The modified bivariate geometric polynomials $\tilde{\omega}_n(x,m)$ are the moments for the family of orthogonal polynomials which have their coefficient array given by the exponential Riordan array
$$\left[\frac{1}{(1+zx)^2}, \frac{1}{m}\ln\left(\frac{1+z(m+x)}{1+xz}\right)\right].$$
\end{proposition}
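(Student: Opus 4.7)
My plan is to mirror the proof of the preceding proposition, exploiting the fact that only the numerator $g$ of the exponential Riordan array changes while the compositional part $f$ stays the same. First I would compute the exponential generating function of $\tilde{\omega}_n(x,m)$. Using $\sum_{n\ge 0} S(n,k)\,z^n/n! = (e^z-1)^k/k!$ with the substitution $z\mapsto mz$, the factor $m^{n-k}$ in the definition produces
$$\sum_{n\ge 0} \tilde{\omega}_n(x,m)\,\frac{z^n}{n!} = \sum_{k\ge 0} (k+1)\left(\frac{x(e^{mz}-1)}{m}\right)^{k} = \frac{m^2}{(m+x-xe^{mz})^2},$$
where the last step uses $\sum_k (k+1)t^k = 1/(1-t)^2$. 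So the candidate moment matrix is
$$M=\left[\frac{m^2}{(m+x-xe^{mz})^2},\ \frac{e^{mz}-1}{m+x-xe^{mz}}\right],$$
whose first column has this EGF.

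Next I would verify that $M^{-1}$ equals the stated array. Since the second component of $M$ is identical to the $f$ of the preceding proposition, its reversion is again $\bar f(z)=\frac{1}{m}\ln\frac{1+z(m+x)}{1+xz}$. The preceding proposition already established $g_{\text{old}}(\bar f(z))=1+xz$, so squaring $g_{\text{old}}$ gives $g(\bar f(z))=(1+xz)^2$, hence
$$M^{-1}=\left[\frac{1}{(1+xz)^2},\ \frac{1}{m}\ln\frac{1+z(m+x)}{1+xz}\right],$$
which matches the claimed coefficient array.

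The main step is to check that the production matrix of $M$ is tridiagonal, i.e.\ that $A(z)=f'(\bar f(z))$ has degree $\le 2$ and $Z(z)=g'(\bar f(z))/g(\bar f(z))$ has degree $\le 1$ in $z$. Since $f$ is unchanged, $A(z)=1+(2x+m)z+x(x+m)z^2$ is inherited from the preceding proposition. Since $g=g_{\text{old}}^2$, the logarithmic derivative simply doubles: $Z(z)=2Z_{\text{old}}(z)=2x+2x(x+m)z$, still linear. Thus the production matrix has bivariate generating function $e^{yz}\!\left(2x+2x(x+m)z+y\bigl(1+(2x+m)z+x(x+m)z^2\bigr)\right)$, yielding the three-term recurrence
$$P_n(z)=\bigl(z-2x-(n-1)(2x+m)\bigr)P_{n-1}(z)-n(n-1)x(x+m)P_{n-2}(z),$$
with $P_0=1$ and $P_1=z-2x$, exhibiting $\tilde{\omega}_n(x,m)$ as the desired moment sequence.

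In principle the tridiagonality verification is the hard step, but the substitution $g\mapsto g^{2}$ with $f$ unchanged turns it into a one-line observation that doubles $Z$ and preserves $A$, rather than an independent Riordan computation. The only genuinely new input is the EGF identity in the first paragraph, and that follows cleanly from the closed form of $\sum_k (k+1)t^k$.
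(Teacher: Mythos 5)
Your proposal is correct and follows essentially the same route as the paper: compute the EGF $\frac{m^2}{(m+x-xe^{mz})^2}$, identify the moment array with second component unchanged from the $\omega_n(x,m)$ case, invert via $\bar f$, and read off the tridiagonal production matrix (your $Z=2x+2x(x+m)z$, $A=1+(2x+m)z+x(x+m)z^2$ reproduce exactly the paper's continued fraction coefficients $b_n=2x+n(2x+m)$ and $\lambda_n=n(n+1)x(x+m)$). The observation that squaring $g$ doubles $Z$ while leaving $A$ untouched is a tidy shortcut, but it is the same argument the paper carries out, so no further comment is needed.
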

We have
$$\left[\frac{1}{1+zx}, \frac{1}{m}\ln\left(\frac{1+z(m+x)}{1+xz}\right)\right]^{-1}=\left[\left(\frac{m}{m+x-xe^{mz}}\right)^2, \frac{e^{mz}-1}{m+x-xe^{mz}}\right].$$
Thus the modified bivariate geometric polynomials $\tilde{\omega}_n(x,m)$ have exponential generating function
$$\frac{m^2}{(m+x-xe^{mz})^2}.$$

Using the production matrix of the moment array $\left[\frac{m^2}{(m+x-xe^{mz})^2}, \frac{e^{mz}-1}{m+x-xe^{mz}}\right]$ we also find that the modified bivariate geometric polynomials $\tilde{\omega}_n(x,m)$ have a generating function given by the following continued fraction.
$$\cfrac{1}{1-
\cfrac{2xz}{1-
\cfrac{(x+m)z}{1-
\cfrac{3xz}{1-
\cfrac{2(x+m)z}{1-
\cfrac{4xz}{1-
\cfrac{3(x+m)z}{1-\cdots}}}}}}},$$ or
$$\cfrac{1}{1-2xz-
\cfrac{2x(x+m)z^2}{1-(4x+m)z-
\cfrac{6x(x+m)z^2}{1-(6x+2m)z-
\cfrac{12x(x+m)z^2}{1-(8x+3m)z-\cdots}}}}.$$

In particular, these polynomials have a Hankel transform given by 
$$h_n = (x(x+m))^{\binom{n+1}{2}} \prod_{k=0}^n ((k+1)(k+2))^{n-k}=(x(x+m))^{\binom{n+1}{2}} (n+1)! \prod_{k=0}^n k!^2.$$ 

\section{Example sequences}
The polynomials that we have studied give rise to many known and interesting integer sequences by taking different values of the variable $x$ and the parameter $m$. The following is a small selection of these. 
We refer to these example sequences by their entry number in the On-Line Encyclopedia of Integer Sequences \cite{SL1, SL2}.

\begin{example} \textbf{The Dowling polynomials}.

The sequence $D_1(n,1)$ is the sequence \seqnum{A000110}$(n+1)$ beginning
$$1, 2, 5, 15, 52, 203, 877, 4140, 21147, 115975, 678570,\ldots.$$ These are the once shifted Bell numbers.

The sequence $D_2(n,1)$ is the sequence \seqnum{A007405} of the Dowling numbers
$$1, 2, 6, 24, 116, 648, 4088, 28640, 219920, 1832224, 16430176,\ldots.$$

The sequence $D_1(n,3)$ is the sequence \seqnum{A035009}$(n+1)$ which is the Stirling transform of the binomial transform of the natural numbers. This sequence begins
$$1, 3, 11, 47, 227, 1215, 7107, 44959, 305091, 2206399, 16913987,\ldots.$$

\end{example}
\begin{example} \textbf{The Tanny-Dowling polynomials}.

The sequence $\mathcal{F}_0(n,1)$ is the sequence \seqnum{A000522} that counts the total number of arrangements of a set of $n$ elements. This sequence begins
$$1, 2, 5, 16, 65, 326, 1957, 13700, 109601, 986410, 9864101,\ldots.$$

The sequence $\mathcal{F}_1(n,1)$ is the sequence \seqnum{A000629} that counts the number of necklaces of partitions of $n+1$ labeled beads. This sequence begins
$$1, 2, 6, 26, 150, 1082, 9366, 94586, 1091670, 14174522, 204495126,\ldots.$$

The sequence $\mathcal{F}_0(n,2)$ is \seqnum{A010844}, which counts the number of ways to sort a spreadsheet with $n$ columns. This sequence begins
$$1, 3, 13, 79, 633, 6331, 75973, 1063623, 17017969, 306323443, 6126468861, 134782314943,\ldots.$$

The sequence $\mathcal{F}_1(n,2)$ is \seqnum{A004123}, which counts the number of generalized weak orders on $n$ points. This sequence begins
$$1, 2, 10, 74, 730, 9002, 133210, 2299754, 45375130, 1007179562,\ldots.$$
\end{example}

\begin{example} \textbf{The Eulerian-Dowling polynomials}

The sequence $\mathcal{A}_0(n,2)$ is the sequence \seqnum{A000522} that counts the total number of arrangements of a set with $n$ elements.

The sequence $\mathcal{A}_{-2}(n,2)$ is the sequence \seqnum{A119880} with e.g.f. $e^{2x} \sech(x)$. This coincides with the values of the Swiss-Knife polynomials (\seqnum{A153641}) at $x=2$. This sequence begins
$$1, 2, 3, 2, -3, 2, 63, 2, -1383, 2, 50523, 2, -2702763, 2, 199360983,\ldots.$$

The sequence $\mathcal{A}_1(n,3)$ is \seqnum{A123227}, or $2^{n+1} \polylog(-n,1/3)$. This sequence begins
$$1, 3, 12, 66, 480, 4368, 47712, 608016, 8855040, 145083648,\ldots.$$
\end{example}

\bigskip
\hrule
\bigskip
\noindent 2010 {\it Mathematics Subject Classification}: Primary
15B36; Secondary 33C45, 11B83, 11C20, 05A15.
\noindent \emph{Keywords:} exponential Riordan array, Dowling polynomials, Tanny-Dowling polynomials, Eulerian-Dowling polynomials,  moment sequence, Hankel transform

\bigskip
\hrule
\bigskip
\noindent Concerns sequences
\seqnum{A000110},
\seqnum{A000522},
\seqnum{A000629},
\seqnum{A004123},
\seqnum{A010844},
\seqnum{A035009},
\seqnum{A119880},
\seqnum{A123227}.

\end{document}